\date{1 December 2011}
\newtheorem{theorem}{Theorem}[section] 
\newtheorem{lemma}[theorem]{Lemma}     
\newtheorem{proposition}[theorem]{Proposition}
\newcommand{\Kb}[1]{\ensuremath{\left\langle{#1}\right\rangle}}
\newcommand{\ft}[3][0]{\ensuremath{{#1}\le{#2}\le{#3}}}
\newcommand{\putln}[5]{\put(#1,#2){\line(#3,#4){#5}}}
\newcounter{x}
\newcounter{y}
\newcommand{\setax}[2]{\setcounter{x}{#1}\setcounter{y}{#2}}
\newcommand{\addxy}[2]{%
\addtocounter{x}{#1}%
\addtocounter{y}{#2}%
}
\newcommand{\lnfrom}[3]{\putln{\value{x}}{\value{y}}{#1}{#2}{#3}}
\newcommand{\putxy}[1]{\put(\value{x},\value{y}){#1}}
\title{On Jones polynomials of alternating pretzel knots}
\author{Masao Hara and Makoto Yamamoto}
\begin{document}
\maketitle
\begin{abstract}
 We show that there are infinitely many pairs of alternating
 pretzel knots whose Jones polynomials are identical.
\end{abstract}

\section{Introduction}
After Jones \cite{Jones1987} defined a polynomial invariant of links, 
new polynomial invariants of links were defined,
namely Homfly polynomial \cite{HOMFLY1985} by
Freyd-Yetter-Hoste-Lickorish-Millett, 
Kauffman Polynomial \cite{Kauffman1990} by Kauffman 
and $Q$-polynomial \cite{BLM1986} by Brandt-Lickorish-Millett.
Kauffman \cite{Kauffman1987} defined bracket polynomials of link
diagrams and  showed the relation between bracket polynomial and Jones
polynomial.
They have been used to estimate crossing numbers of links 
\cite{HTY2000}, \cite{HY1992}, \cite{LT1988}, \cite{Murasugi1987},
settle Tait conjectures
\cite{Murasugi1987}, \cite{Murasugi1988Cambridge},
\cite{Thistlethwaite1987} 
and inspect symmetries of knots 
\cite{Murasugi1988Pacific}, \cite{Traczyk1990}, 
\cite{Yokota1991AMS}, \cite{Yokota1991Annalen}.

On the other hand, 
it seems that Jones polynomial is not well understood in term of intrinsic
topological properties of links.
It is unknown whether there exists a non-trivial knot whose Jones
polynomial is equal to $1$.
In case of links, Eliahou-Kauffman-Thistlethwaite \cite{EKT2003} exhibited
$k$-component nontrivial links $(k\ge2)$ whose Jones polynomials are equal to
that of the $k$-component trivial link.

In this paper, we exhibit infinitely many pairs of alternating pretzel knots
whose Jones polynomials are identical but Alexander polynomials are
distinct.
The second author and his students, Asami, Mori and Senuma, 
made programs that calculate
bracket polynomials of pretzel links, 
and made a table of bracket polynomials of alternating pretzel knots
such that their crossing numbers are less than or equal to 100.
There are $28\,289\,375$ alternating pretzel knots with their crossing
numbers are less than or equal to  100, 
there are  twelve pairs of them satisfying with above property.

For a link diagram $D$, we denote the set of crossings of $D$ by
$\mathcal{C}(D)$ and the crossing number of $D$ by $c(D)$.
A function $s:\mathcal{C}(D)\maketitle\longrightarrow\{-1,+1\}$
is called a \emph{state} of $D$, and the set of states of $D$
is denoted by $\mathcal{S}(D)$. 
For a state $s$ of $D$, we set
\[
 \Kb{D|s} = A^{\sum_{c\in \mathcal{C}(D)}s(c)}
\left(-A^2-A^{-2}\right)^{|sD|-1},
\]
where $sD$ is $D$ with all of its crossings nullified according to
the rule depicted by Figure~\ref{figure:nullify},
and $|sD|$ is the number of connected components of $sD$.
Kauffman bracket polynomial $\displaystyle\Kb{D}$ of $D$ is defined by
\[
 \Kb D = \sum_{s\in\mathcal{S}(D)}\Kb{D|s}.
\] 
For an oriented link $L$ and its diagram $D$,
we have
\[
 V_L(A^{-4}) = (-A)^{-3w(D)}\Kb D,
\]
where $V_L$ is the Jones polynomial of $L$ and $w(D)$ the writhe of $D$.
This definition of Jones polynomial is due to Kauffman \cite{Kauffman1987}.

\begin{figure}
\begin{center}
  \begin{picture}(210,60)
   \thicklines
   \put(0,60){\line(1,-1){20}}
   \put(50,10){\line(-1,1){20}}
   \put(0,10){\line(1,1){50}}
   \put(25,9){\makebox(0,0)[t]{$c$}}
   \qbezier(80,10)(120,35)(80,60)
   \qbezier(130,10)(90,35)(130,60)
   \put(105,9){\makebox(0,0)[t]{$s(c)=1$}}
   \qbezier(160,10)(185,50)(210,10)
   \qbezier(160,60)(185,20)(210,60)
   \put(185,9){\makebox(0,0)[t]{$s(c)=-1$}}
  \end{picture}\\
\caption{\label{figure:nullify}}
\end{center}
\end{figure}

Let $D(p_1,\dots,p_n)$ be the pretzel diagram shown
in Figure~\ref{figure:pretzel},
and $P(p_1,\dots,p_n)$ the link represented by $D(p_1,\dots,p_n)$.
We denote 
\[
D(\underbrace{1,\dots,1}_m,p_1,\dots,p_n) \mbox{\quad and\quad }
P(\underbrace{1,\dots,1}_m,p_1,\dots,p_n)
\]
by $D(m;p_1,\dots,p_n)$ and $P(m;p_1,\dots,p_n)$ respectively for short.
Particularly $D(0;p_1,\dots,p_n)=D(p_1,\dots,p_n)$ and
$P(0;p_1,\dots,p_n)=P(p_1,\dots,p_n)$.

\begin{figure}
\begin{center}
 \begin{picture}(160,140)(0,0)
  \thicklines
  \multiput(0,0)(160,0){2}{\line(0,1){10}}
  \multiput(0,130)(160,0){2}{\line(0,1){10}}
  \multiput(0,0)(0,140){2}{\line(1,0){160}}
  \multiput(30,10)(0,120){2}{\line(1,0){30}}
  \multiput(90,10)(0,120){2}{\line(1,0){10}}
  \multiput(120,10)(0,120){2}{\line(1,0){10}}
  \multiput(100,10)(4,0){5}{\line(1,0){2}}
  \multiput(100,130)(4,0){5}{\line(1,0){2}}
  \setax{0}{10} 
  \lnfrom{1}{1}{30} \addxy{30}0 \lnfrom{-1}{1}{12} 
  \addxy{-30}{30} \lnfrom{1}{-1}{12}
  \addxy01 
  \multiput(\value{x},\value{y})(0,4){7}{\line(0,1){2}}
  \addxy{30}{0} \multiput(\value{x},\value{y})(0,4){7}{\line(0,1){2}}
  \addxy{-30}{29} 
  \lnfrom{1}{1}{30} \addxy{30}0 \lnfrom{-1}{1}{12} 
  \addxy{-30}{30} \lnfrom{1}{-1}{12}
  \lnfrom{1}{1}{30} \addxy{30}0 \lnfrom{-1}{1}{12} 
  \addxy{-30}{30} \lnfrom{1}{-1}{12}
  \addxy{-1}{-45}
  \putxy{\makebox(0,0)[r]{$\displaystyle p_1$}}
  \setax{60}{10} 
  \lnfrom{1}{1}{30} \addxy{30}0 \lnfrom{-1}{1}{12} 
  \addxy{-30}{30} \lnfrom{1}{-1}{12}
  \addxy01 
  \multiput(\value{x},\value{y})(0,4){7}{\line(0,1){2}}
  \addxy{30}{0} \multiput(\value{x},\value{y})(0,4){7}{\line(0,1){2}}
  \addxy{-30}{29} 
  \lnfrom{1}{1}{30} \addxy{30}0 \lnfrom{-1}{1}{12} 
  \addxy{-30}{30} \lnfrom{1}{-1}{12}
  \lnfrom{1}{1}{30} \addxy{30}0 \lnfrom{-1}{1}{12} 
  \addxy{-30}{30} \lnfrom{1}{-1}{12}
  \addxy{-1}{-45} \putxy{\makebox(0,0)[r]{$\displaystyle p_2$}}
  \setax{130}{10} 
  \lnfrom{1}{1}{30} \addxy{30}0 \lnfrom{-1}{1}{12} 
  \addxy{-30}{30} \lnfrom{1}{-1}{12}
  \addxy01 
  \multiput(\value{x},\value{y})(0,4){7}{\line(0,1){2}}
  \addxy{30}{0} \multiput(\value{x},\value{y})(0,4){7}{\line(0,1){2}}
  \addxy{-30}{29} 
  \lnfrom{1}{1}{30} \addxy{30}0 \lnfrom{-1}{1}{12} 
  \addxy{-30}{30} \lnfrom{1}{-1}{12}
  \lnfrom{1}{1}{30} \addxy{30}0 \lnfrom{-1}{1}{12} 
  \addxy{-30}{30} \lnfrom{1}{-1}{12}
  \addxy{-1}{-45} \putxy{\makebox(0,0)[r]{$\displaystyle p_n$}}
 \end{picture}\\
\caption{\label{figure:pretzel}}
\end{center} 
\end{figure} 

\begin{theorem} \label{theorem:bracket}
 Let $a,b,c$ be positive integers and $m$ a non-negative even number.
\begin{eqnarray*}
\lefteqn{ \Kb{D(m;a,b,c)}}\\
&=&A^{3m+a+b+c}\delta^{-4}\left(
\left(F_a+\delta^2\right)\left(F_b+\delta^2\right)\left(F_c+\delta^2\right)
+(-A^{-4})^m(\delta^2-1)F_aF_bF_c
\right),
\end{eqnarray*}
where $\delta=-A^2-A^{-2}$ and
$F_i=(-A^{-4})^i-1$.
\end{theorem}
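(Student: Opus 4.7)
The plan is to reduce the computation to a skein calculation in the space of $2$-tangles, a two-dimensional module with basis the identity tangle $I$ (two parallel vertical strands) and the cup-cap tangle $E$ (a pair of horizontal arcs), satisfying $E^2 = \delta E$ and $IE = EI = E$. A single crossing of the type shown in Figure~\ref{figure:pretzel} expands by Kauffman's skein relation as $A\,I + A^{-1}E$. Using the recursion $T_{p+1} = T_p\cdot T_1$ together with a short linear recurrence solvable by a geometric series, I would prove by induction on $p$ that the $p$-twist tangle admits the expansion
\[
T_p \;=\; A^p\,I \;+\; \frac{A^p F_p}{\delta}\,E.
\]

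I would then compute the bracket of the general pretzel $P(p_1,\ldots,p_n)$ by substituting this formula into each tangle slot and distributing. Each of the $2^n$ resulting terms is indexed by a subset $S\subseteq\{1,\ldots,n\}$ of tangles replaced by $E$ (the rest by $I$), contributing
\[
\Bigl(\prod_{i\in S}\frac{A^{p_i}F_{p_i}}{\delta}\Bigr)\Bigl(\prod_{i\notin S}A^{p_i}\Bigr)\,\delta^{|sD|-1},
\]
where $|sD|$ is the number of loops in the resulting trivial diagram. A direct trace through the two outer connecting arcs of the pretzel closure shows that $|sD|$ depends only on $k := |S|$, equalling $n-k$ when $0\le k<n$ and equalling $2$ when $k=n$. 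Packaging the contributions from $k<n$ via the generating function $\prod_i(\delta\alpha_{p_i}+\beta_{p_i})$, with $\alpha_p = A^p$ and $\beta_p = A^p F_p/\delta$, and separately adding the exceptional $k=n$ contribution, yields the master identity
\[
\langle P(p_1,\ldots,p_n)\rangle \;=\; A^{\sum p_i}\,\delta^{-n-1}\left[\prod_{i=1}^n(\delta^2+F_{p_i}) + (\delta^2-1)\prod_{i=1}^n F_{p_i}\right].
\]

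Finally, I would specialize to $D(m;a,b,c)$ by setting $n = m+3$ and $p_1 = \cdots = p_m = 1$. A quick check using $\delta = -A^2-A^{-2}$ gives the identities $\delta^2 + F_1 = A^4 + 1 = -A^2\delta$ and $F_1 = -A^{-4} - 1 = A^{-2}\delta$, whence $(\delta^2+F_1)^m = (-1)^m A^{2m}\delta^m$ and $F_1^m = A^{-2m}\delta^m$. Because $m$ is assumed \emph{even}, the sign $(-1)^m$ equals $+1$; the extra powers of $\delta$ cancel against $\delta^{-m-4}$ to leave $\delta^{-4}$, and rewriting $A^{-4m} = (-A^{-4})^m$ (again using that $m$ is even) recovers exactly the expression in the theorem. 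The most delicate step in this plan is the component count in the second paragraph: verifying that $|sD|$ depends only on $|S|$ and not on which particular tangles are smoothed to $E$. This is a straightforward but slightly tedious trace around the pretzel's cyclic structure, with the case $k=n$ distinguished from $k<n$.
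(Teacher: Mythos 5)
Your proposal is correct, but it takes a genuinely different route from the paper. You work in the Kauffman bracket skein module of $2$-tangles: the closed form $T_p = A^p I + (A^pF_p/\delta)E$ (which checks out by induction, using $A+A^{-1}\delta=-A^{-3}$), the loop count $|sD|=n-k$ for $0\le k<n$ and $|sD|=2$ for $k=n$ (which is indeed the only delicate point, and is correct: contracting the $E$-tangles leaves $n-k\ge1$ vertical tangles in cyclic order, hence $n-k$ circles, while the all-$E$ state gives the top and bottom circles), and the resulting master identity
$\Kb{P(p_1,\dots,p_n)} = A^{\sum p_i}\delta^{-n-1}\bigl(\prod_i(F_{p_i}+\delta^2)+(\delta^2-1)\prod_i F_{p_i}\bigr)$,
specialized via $F_1=A^{-2}\delta$, $F_1+\delta^2=-A^2\delta$ and evenness of $m$; this reproduces the theorem exactly (and your crossing convention matches the paper's, since the all-positive state of $D(m;a,b,c)$ has $m+3$ circles in both accounts). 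The paper instead evaluates the state sum directly: it groups states by the numbers $(i,j,k,l)$ of negative smoothings in the four twist regions, records the component counts $\mu(i,j,k,l)$ case by case, partitions the index set into $X_0,\dots,X_3$ according to which of $j,k,l$ vanish, and sums each piece by binomial expansions (Lemmas 2.1--2.4). Your approach buys generality and brevity — a closed formula for every pretzel link, from which Theorem 1.1 (and in principle the comparison in Theorem 1.2) falls out by substitution — at the cost of importing the Temperley--Lieb formalism and the loop-count lemma; the paper's approach is longer but entirely elementary and self-contained, with the same combinatorial fact appearing as the explicit values of $\mu(i,j,k,l)$ rather than as your $n-k$ versus $2$ dichotomy.
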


\begin{theorem}\label{theorem:Jones}
 For a non-negative even number $k$, 
\[
V_{P(k;k+4,k+3,k+5)}=V_{P(k+6;k+2,k+1,k+3)}. 
\]
\end{theorem}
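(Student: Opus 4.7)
The plan is to use Theorem~\ref{theorem:bracket} to obtain closed-form Kauffman brackets for both diagrams, pass to the Jones polynomials via $V_L(A^{-4}) = (-A)^{-3w(D)}\Kb{D}$, and then verify the resulting algebraic identity.

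First, I would apply Theorem~\ref{theorem:bracket} with $(m,a,b,c) = (k, k+4, k+3, k+5)$ and with $(m,a,b,c) = (k+6, k+2, k+1, k+3)$. Setting $u = -A^{-4}$ and $\eta = \delta^2 - 1 = A^4 + A^{-4} + 1$, so that $F_i = u^i - 1$ and $F_i + \delta^2 = u^i + \eta$, the two brackets become
\[
\Kb{D(k; k+4, k+3, k+5)} = A^{6k+12}\delta^{-4}\biggl[\prod_{j=3}^{5}(u^{k+j} + \eta) + u^k \eta \prod_{j=3}^{5}(u^{k+j} - 1)\biggr]
\]
and
\[
\Kb{D(k+6; k+2, k+1, k+3)} = A^{6k+24}\delta^{-4}\biggl[\prod_{j=1}^{3}(u^{k+j} + \eta) + u^{k+6} \eta \prod_{j=1}^{3}(u^{k+j} - 1)\biggr],
\]
where $u^{k+6} = u^k \cdot A^{-24}$ since $u^6 = A^{-24}$. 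Both diagrams have $4k+12$ crossings.

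Next, I would compute the writhes. Each of these alternating pretzels has exactly one tangle of even length (namely $k+4$ for the first diagram and $k+2$ for the second) and all other tangles of odd length. Tracing the knot and tallying crossing signs shows that the even-length tangle is antiparallel (its crossings contribute negatively) while all odd-length tangles are parallel (contributing positively), yielding $w(D(k; k+4, k+3, k+5)) = 2k+4$ and $w(D(k+6; k+2, k+1, k+3)) = 2k+8$, a difference of $4$.

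Consequently the desired identity $V_{P(k; k+4, k+3, k+5)} = V_{P(k+6; k+2, k+1, k+3)}$ is equivalent to $A^{12}\Kb{D(k; k+4, k+3, k+5)} = \Kb{D(k+6; k+2, k+1, k+3)}$. Substituting the bracket formulas and canceling the common factor $A^{6k+24}\delta^{-4}$ reduces this to a polynomial identity in $A$. Introducing $v = u^k$ and viewing both sides as polynomials in $v$, the coefficients of $v^0$, $v^2$, and $v^4$ will match immediately, while those of $v^1$ and $v^3$ will match thanks to the single algebraic relation $\eta u + u^2 - u + 1 = 0$, equivalent to $\eta = 1 - u - u^{-1}$, which holds by the very definitions of $u$ and $\eta$. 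The hard part will be this algebraic verification: although it reduces to a finite coefficient-by-coefficient check after the substitution $v = u^k$, the bookkeeping is intricate, and the key is to recognize that every nontrivial cancellation reduces to the identity $\eta u + u^2 - u + 1 = 0$.
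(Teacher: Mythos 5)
Your plan is essentially the paper's own proof: apply Theorem~\ref{theorem:bracket} to both diagrams, note that the writhes differ by $4$ so the claim reduces to $A^{12}\Kb{D(k;k+4,k+3,k+5)}=\Kb{D(k+6;k+2,k+1,k+3)}$, and then verify the resulting identity in $u=-A^{-4}$ and $\eta=\delta^2-1$; the only divergence is that the paper organizes this last verification into two short lemmas built on $F_{a+b}=(-A^{-4})^bF_a+F_b$, whereas your coefficient-by-coefficient check in powers of $u^k$ does go through exactly as you predict, with $v^0,v^2,v^4$ matching trivially and $v^1,v^3$ matching via $\eta=1-u-u^{-1}$. Your writhe values $2k+4$ and $2k+8$ differ from the paper's stated $k+4$ and $k+8$, but since only their difference of $4$ enters the argument, this does not affect the reduction or the conclusion.
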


\begin{proposition}\label{prop:Alexander}
 For a non-negative even number $k$, 
\[
\Delta_{P(k;k+4,k+3,k+5)}\neq\Delta_{P(k+6;k+2,k+1,k+3)}. 
\]
\end{proposition}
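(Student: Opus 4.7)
The plan is to distinguish the two knots by the breadths of their Alexander polynomials. Both $D(k;k+4,k+3,k+5)$ and $D(k+6;k+2,k+1,k+3)$ are reduced alternating diagrams (every twist parameter is positive) with the same crossing number $c=4k+12$. By the classical theorem of Crowell and Murasugi, an alternating knot $K$ satisfies
\[
\deg\Delta_{K}\;=\;2g(K)\;=\;c(D)-s(D)+1,
\]
where $g(K)$ is the Seifert genus and $s(D)$ is the number of Seifert circles produced by applying Seifert's algorithm to any reduced alternating diagram $D$ of $K$. The argument therefore reduces to counting $s(D)$ in each diagram.

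The combinatorial heart of the count is to classify each twist box $T_i$ as parallel or antiparallel with respect to the knot's induced orientation. Both of our knots have exactly one even twist parameter (namely $k+4$ in the first knot, $k+2$ in the second, since $k$ is even), and every other parameter is odd. A trace of the knot through the $n$ tangles---equivalently, a $\pm1$ labelling of the $2n$ top ports forced to be compatible with the external connections and with the cyclic identifications along the top and bottom arcs---shows that in a pretzel with exactly one even parameter $p_{j}$ and $n$ odd, the twist box $T_{j}$ is the unique antiparallel one, while the remaining $n-1$ twist boxes are all parallel. A parallel twist box contributes no interior Seifert circles, whereas an antiparallel twist box of parameter $p$ contributes $p-1$ bigonal interior circles; a further direct trace of the external arcs together with the smoothed tangles then yields $n-1$ ``global'' Seifert circles, so
\[
s(D)\;=\;(n-1)+(p_{j}-1).
\]

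Substituting: $D(k;k+4,k+3,k+5)$ has $n=k+3$ and $p_{j}=k+4$, giving $s=2k+5$ and $\deg\Delta_{P(k;k+4,k+3,k+5)}=2k+8$; $D(k+6;k+2,k+1,k+3)$ has $n=k+9$ and $p_{j}=k+2$, giving $s=2k+9$ and $\deg\Delta_{P(k+6;k+2,k+1,k+3)}=2k+4$. The two degrees differ by $4$, so the Alexander polynomials cannot coincide. The only nontrivial step is the orientation/counting claim identifying the antiparallel tangle and producing exactly $n-1$ global Seifert circles; I expect this to be the main obstacle, but it is verified uniformly by the $\pm1$-labelling parity argument, with small cases ($k=0,2$) serving as sanity checks.
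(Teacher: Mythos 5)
Your proposal is correct and follows essentially the same route as the paper: both invoke the Crowell--Murasugi theorem that for an alternating knot $\deg\Delta_K=2g(K)$ with the genus realized by Seifert's algorithm on the alternating pretzel diagram, and both arrive at degrees $2k+8$ and $2k+4$. You merely make explicit the Seifert-circle count ($s=(n-1)+(p_j-1)$, with the unique even twist region antiparallel) that the paper leaves as an unstated computation, and your count is accurate.
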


We note that Homfly polynomials of $P(k;k+4,k+3,k+5)$ and $P(k+6;k+2,k+1,k+3)$ are distinct, 
since Alexander polynomial of knots is the specific value of Homfly polynomial
of them.  

\section{Proof of Theorem~\ref{theorem:bracket}}

\begin{figure}
\begin{center}
 \begin{picture}(310,180)(-10,0)
  \thicklines
  \putln0{105}01{75} \putln0{75}0{-1}{75}
  \putln{120}{105}01{45} \putln{120}{75}0{-1}{45}
  \multiput(120,30)(60,0){3}{\line(1,0){30}}
  \multiput(120,150)(60,0){3}{\line(1,0){30}}
  \multiput(0,0)(0,180){2}{\line(1,0){300}}
  \multiput(300,0)(0,150){2}{\line(0,1){30}}
  \setax{0}{75} 
  \lnfrom{1}{1}{30} \addxy{0}{30} \lnfrom{1}{-1}{12} 
  \addxy{30}{-30} \lnfrom{-1}{1}{12}
  \lnfrom{1}{1}{30} \addxy{0}{30} \lnfrom{1}{-1}{12} 
  \addxy{30}{-30} \lnfrom{-1}{1}{12}
  \addxy10 
  \multiput(\value{x},\value{y})(4,0){7}{\line(1,0){2}}
  \addxy{0}{30} \multiput(\value{x},\value{y})(4,0){7}{\line(1,0){2}}
  \addxy{29}{-30} 
  \lnfrom{1}{1}{30} \addxy{0}{30} \lnfrom{1}{-1}{12} 
  \addxy{30}{-30} \lnfrom{-1}{1}{12}
  \addxy{-75}{-1}
  \putxy{\makebox(0,0)[t]{$m$}}
  \setax{150}{30} 
  \lnfrom{1}{1}{30} \addxy{30}0 \lnfrom{-1}{1}{12} 
  \addxy{-30}{30} \lnfrom{1}{-1}{12}
  \addxy01 
  \multiput(\value{x},\value{y})(0,4){7}{\line(0,1){2}}
  \addxy{30}{0} \multiput(\value{x},\value{y})(0,4){7}{\line(0,1){2}}
  \addxy{-30}{29} 
  \lnfrom{1}{1}{30} \addxy{30}0 \lnfrom{-1}{1}{12} 
  \addxy{-30}{30} \lnfrom{1}{-1}{12}
  \lnfrom{1}{1}{30} \addxy{30}0 \lnfrom{-1}{1}{12} 
  \addxy{-30}{30} \lnfrom{1}{-1}{12}
  \addxy{-1}{-45}
  \putxy{\makebox(0,0)[r]{$a$}}
  \setax{210}{30} 
  \lnfrom{1}{1}{30} \addxy{30}0 \lnfrom{-1}{1}{12} 
  \addxy{-30}{30} \lnfrom{1}{-1}{12}
  \addxy01 
  \multiput(\value{x},\value{y})(0,4){7}{\line(0,1){2}}
  \addxy{30}{0} \multiput(\value{x},\value{y})(0,4){7}{\line(0,1){2}}
  \addxy{-30}{29} 
  \lnfrom{1}{1}{30} \addxy{30}0 \lnfrom{-1}{1}{12} 
  \addxy{-30}{30} \lnfrom{1}{-1}{12}
  \lnfrom{1}{1}{30} \addxy{30}0 \lnfrom{-1}{1}{12} 
  \addxy{-30}{30} \lnfrom{1}{-1}{12}
  \addxy{-1}{-45} \putxy{\makebox(0,0)[r]{$\displaystyle b$}}
  \setax{270}{30} 
  \lnfrom{1}{1}{30} \addxy{30}0 \lnfrom{-1}{1}{12} 
  \addxy{-30}{30} \lnfrom{1}{-1}{12}
  \addxy01 
  \multiput(\value{x},\value{y})(0,4){7}{\line(0,1){2}}
  \addxy{30}{0} \multiput(\value{x},\value{y})(0,4){7}{\line(0,1){2}}
  \addxy{-30}{29} 
  \lnfrom{1}{1}{30} \addxy{30}0 \lnfrom{-1}{1}{12} 
  \addxy{-30}{30} \lnfrom{1}{-1}{12}
  \lnfrom{1}{1}{30} \addxy{30}0 \lnfrom{-1}{1}{12} 
  \addxy{-30}{30} \lnfrom{1}{-1}{12}
  \addxy{-1}{-45} \putxy{\makebox(0,0)[r]{$\displaystyle c$}}
  \thinlines
  \put(60,90){\oval(130,50)}
  \put(60,120){\makebox(0,0)[b]{$\displaystyle \mathcal{C}_0$}}
  \put(165,90){\oval(50,130)}
  \put(165,160){\makebox(0,0)[b]{$\displaystyle \mathcal{C}_1$}}
  \put(225,90){\oval(50,130)}
  \put(225,160){\makebox(0,0)[b]{$\displaystyle \mathcal{C}_2$}}
  \put(285,90){\oval(50,130)}
  \put(285,160){\makebox(0,0)[b]{$\displaystyle \mathcal{C}_3$}}
 \end{picture}
\caption{\label{figure:Dmabc}}
\end{center} 
\end{figure} 

Let $D$ be $D(m;a,b,c)$ and 
$\displaystyle\mathcal{C}_i\,(i=0,1,2,3)$ the subset of
$\mathcal C(D)$  as shown by Figure~\ref{figure:Dmabc}.
By $\displaystyle \mathcal S_{i,j,k,l}$,
we denote the set of states $s$ of $D$
such that
$\left|s_1^{-1}(-1)\cap\mathcal{C}_0\right|=i$,
$\left|s_1^{-1}(-1)\cap\mathcal{C}_1\right|=j$,
$\left|s_1^{-1}(-1)\cap\mathcal{C}_2\right|=k$ and
$\left|s_1^{-1}(-1)\cap\mathcal{C}_3\right|=l$
for non-negative integers $i,j,k,l$.
Obviously, if $ s_1,s_2\in\mathcal S_{i,j,k,l}$, 
then $  \sum_{c\in\mathcal C(D)}s_1(c) 
= \sum_{c\in\mathcal C(D)}s_2(c)$,
and $\displaystyle \left|s_1D\right| = \left|s_2D\right|$.
For $\displaystyle s\in\mathcal S_{i,j,k,l}$, we denote 
$ \sum_{c\in\mathcal  C(D)} s(c)$ and
$ |sD|$ by $\sigma(i,j,k,l)$ and $\mu(i,j,k,l)$
respectively.
Then 
\[
 \Kb D = \sum_{s\in\mathcal S(D)}\Kb{D|s}
 = \sum_{i,j,k,l}\sum_{s\in\mathcal S(i,j,k,l)}
A^{\sigma(i,j,k,l)}\delta^{\mu(i,j,k,l)-1}.
\]
Since the number of state in $S_{i,j,k,l}$ is 
$\binom mi\binom{a}{j}\binom{b}{k}\binom{c}{l}$ and
$\sigma(i,j,k,l)=m+a+b+c-2(i+j+k+l)$ ,
we have
\begin{equation}
 \label{equation:BrackeSum}
 \Kb D = 
\sum_{(i,j,k,l)\in X}
 \binom mi\binom{a}{j}\binom{b}{k}\binom{c}{l}
 A^{m+a+b+c-2(i+j+k+l)}\delta^{\mu(i,j,k,l)-1},
\end{equation}
where 
\[
X=\{(i,j,k,l)\,|\,0\le i\le m, 0\le j\le a,
0\le k\le b, 0\le l\le c\}.
\]

We decompose $X$ into $4$ mutually disjoint subsets
$X_0, X_1, X_2$ and $X_3$,
where
\begin{eqnarray*}
 X_0 &=& \{(i,j,k,l)\in X\,|\,j=k=l=0\},\\
 X_1 &=& \{(i,j,k,l)\in X\setminus X_0\,|\,j=k=0, k=l=0\,\mbox{or}\,l=j=0\},\\
 X_2 &=& \{(i,j,k,l)\in X\setminus(X_0\cup X_1)\,|\,
  j=0, l=0 \,\mbox{or}\, k=0\},\\
 X_3&=& X \setminus \left(X_0\cup X_1\cup X_2\right).
\end{eqnarray*}
This means that $X=X_0{\cup}X_1{\cup}X_2{\cup}X_3$ and
$X_{\alpha}{\cap}X_{\beta}=\emptyset,\,(0\le\alpha<\beta\le3).$

\begin{lemma}\label{lemma:X0}
\[
 \sum_{(i,j,k,l)\in X_0}  
 \binom mi\binom{a}{j}\binom{b}{k}\binom{c}{l}
A^{m+a+b+c-2(i+j+k+l)}\delta^{\mu(i,j,k,l)-1}=A^{3m+a+b+c}\delta^2.
\]
\end{lemma}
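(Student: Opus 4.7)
The plan is to compute $\mu(i,0,0,0)$ explicitly and then recognise the resulting sum as a binomial expansion. Since $j=k=l=0$ forces every crossing of $\mathcal{C}_1,\mathcal{C}_2,\mathcal{C}_3$ to carry sign $+1$, a direct inspection of a vertical twist smoothed this way shows that each such column collapses to two parallel vertical strands and contributes no internal closed loops. Only the smoothings inside the horizontal twist $\mathcal{C}_0$ will do genuine work.

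To treat $\mathcal{C}_0$ I would regard it as a horizontal composition of $m$ elementary two-tangles. A $+1$-smoothing produces a pair of vertical arcs (call this pattern $V$, joining the two left ports and the two right ports), while a $-1$-smoothing produces horizontal arcs (pattern $H$, top-top and bottom-bottom). The local composition laws read $V\circ V = V$ plus one closed bigon, $V\circ H = H\circ V = V$ with no extra loop, and $H\circ H = H$ with no extra loop. A short induction on $m$ then yields: if $i\le m-1$, the tangle $\mathcal{C}_0$ resolves to the $V$-pattern together with exactly $m-i-1$ internal closed loops; if $i=m$, it resolves to the $H$-pattern with no extra loops. Splicing these tangles into the pretzel skeleton (the three trivial vertical $2$-strand tangles in $\mathcal{C}_1,\mathcal{C}_2,\mathcal{C}_3$ together with the two outer closing arcs) and tracing components gives four global loops when $\mathcal{C}_0$ sits in the $V$-pattern and three when it sits in the $H$-pattern. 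Hence $\mu(i,0,0,0) = m-i+3$ for $0\le i\le m-1$ and $\mu(m,0,0,0) = 3$.

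Substituting, factoring $A^{a+b+c}\delta^{2}$ out, and reindexing $j = m - i$ (so that the isolated $i=m$ term plays the role of the $j=0$ summand), the sum collapses to
\[
A^{a+b+c-m}\delta^{2}\sum_{j=0}^{m}\binom{m}{j}(A^{2}\delta)^{j} = A^{a+b+c-m}\delta^{2}\bigl(1+A^{2}\delta\bigr)^{m}.
\]
Now $1+A^{2}\delta = 1+A^{2}(-A^{2}-A^{-2}) = -A^{4}$, and since $m$ is even $(-A^{4})^{m} = A^{4m}$, which yields the target $A^{3m+a+b+c}\delta^{2}$. The most delicate step is the separate treatment of the boundary case $i=m$ in the loop count: the resolution of $\mathcal{C}_0$ jumps qualitatively from the $V$-pattern to the $H$-pattern precisely when every crossing is smoothed negatively, at which moment two of the four global loops merge into one while the internal bigons simultaneously vanish. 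The parity hypothesis on $m$ is used only at the very end, to dispose of the sign $(-1)^{m}$ in the binomial simplification.
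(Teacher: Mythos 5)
Your proof is correct and takes essentially the same route as the paper: the paper simply records $\mu(i,0,0,0)=m-i+3$ (the loop count you justify via the $V/H$ tangle composition) and then evaluates the same binomial sum, using $A^{-2}+\delta=-A^{2}$ where you use $1+A^{2}\delta=-A^{4}$, with the parity of $m$ entering at the same final step. Note that your separately treated boundary case is not actually exceptional, since $\mu(m,0,0,0)=3=m-m+3$, so the formula $\mu(i,0,0,0)=m-i+3$ holds uniformly for $0\le i\le m$.
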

\begin{proof*}
 If $(i,j,k,l)\in X_0$, then $j=k=l=0$ and $\mu(i,0,0,0)=m-i+3$. 
 It follows that
 \begin{eqnarray*} 
 \lefteqn{\sum_{(i,j,k,l)\in X_0}   
 \binom mi\binom{a}{j}\binom{b}{k}\binom{c}{l} 
 A^{m+a+b+c-2(i+j+k+l)}\delta^{\mu(i,j,k,l)-1}}\\ 
 &=&\sum_{i=0}^{m}\binom mi A^{m+a+b+c-2i}\delta^{m-i+3-1}
 =A^{m+a+b+c}\delta^2\sum_{i=0}^{m}\binom mi(A^{-2})^i\delta^{m-i}\\
 &=&A^{m+a+b+c}\delta^2(A^{-2}+\delta)^m
 \end{eqnarray*}
 Since $m$ is an even number and $A^2+\delta=-A^2$, we have
\[
 \sum_{(i,j,k,l)\in X_0}  
 \binom mi\binom{a}{j}\binom{b}{k}\binom{c}{l}
 A^{m+a+b+c-2(i+j+k+l)}\delta^{\mu(i,j,k,l)-1}
 =A^{3m+a+b+c}\delta^2. \makebox[0cm]{\hspace{22mm}\proofbox}
\] 
\end{proof*}

\begin{lemma}\label{lemma:X1}
\[
 \sum_{(i,j,k,l)\in X_1}  
 \binom mi\binom{a}{j}\binom{b}{k}\binom{c}{l}
A^{m+a+b+c-2(i+j+k+l)}\delta^{\mu(i,j,k,l)-1}=
A^{3m+a+b+c} (F_a+F_b+F_c).
\]
\end{lemma}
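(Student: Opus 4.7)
The plan is to decompose $X_1$ as the disjoint union $X_1 = Y_1 \cup Y_2 \cup Y_3$ with
\[
 Y_1 = \{(i,0,0,l)\in X : 1\le l\le c\},\ Y_2 = \{(i,j,0,0)\in X : 1\le j\le a\},\ Y_3 = \{(i,0,k,0)\in X : 1\le k\le b\},
\]
and to compute each subsum by a parallel argument. I will detail the subsum over $Y_2$; the subsums over $Y_1$ and $Y_3$ are handled by identical reasoning, with the roles of $\mathcal{C}_1, \mathcal{C}_2, \mathcal{C}_3$ and of the parameters $a, b, c$ interchanged appropriately.

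For $(i,j,0,0) \in Y_2$, the key step is to establish
\[
 \mu(i,j,0,0) = m - i + j + 1 \qquad (0 \le i \le m,\ 1 \le j \le a)
\]
by analyzing the state-resolved diagram. With $k = l = 0$, the blocks $\mathcal{C}_2$ and $\mathcal{C}_3$ are fully $(+1)$-smoothed, and each vertical twist reduces to a vertical trivial tangle with no internal loops. With $j \geq 1$, the block $\mathcal{C}_1$ contributes $j - 1$ bigon loops (one between each pair of consecutive $(-1)$-smoothings in the vertical twist) and externally behaves as a horizontal trivial tangle. The block $\mathcal{C}_0$, being a horizontal twist, analogously contributes $m - i - 1$ internal loops with vertical external behavior when $i < m$, and $0$ internal loops with horizontal external behavior when $i = m$. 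Tracing the closure of the resulting system of trivial tangles shows that the global (non-internal) loop count is $3$ when $i < m$ and $2$ when $i = m$, so that in either case the total $\mu(i,j,0,0)$ equals $m - i + j + 1$.

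Substituting this formula into the restriction of (\ref{equation:BrackeSum}) to $Y_2$ and separating the sums over $i$ and $j$,
\[
 \sum_{(i,j,0,0)\in Y_2} \binom{m}{i}\binom{a}{j} A^{m+a+b+c-2(i+j)} \delta^{m-i+j}
 = A^{m+a+b+c}\,(A^{-2}+\delta)^m\,\bigl((1+A^{-2}\delta)^a - 1\bigr).
\]
Since $m$ is even and $A^{-2}+\delta = -A^2$, the first factor equals $A^{2m}$; since $A^{-2}\delta = -1 - A^{-4}$, we have $1 + A^{-2}\delta = -A^{-4}$, so the second factor equals $(-A^{-4})^a - 1 = F_a$. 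Hence the subsum over $Y_2$ equals $A^{3m+a+b+c} F_a$, and the analogous computations give $A^{3m+a+b+c} F_b$ over $Y_3$ and $A^{3m+a+b+c} F_c$ over $Y_1$. Summing the three contributions yields the claim.

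The main obstacle is the combinatorial identification of $\mu(i,j,0,0)$. The delicate point is that at $i = m$ the external behavior of the block $\mathcal{C}_0$ switches discontinuously from vertical to horizontal trivial tangle, and its internal loop count simultaneously jumps from $0$ to $-1$ (formally); the two discontinuities cancel so that the closed-form expression $m - i + j + 1$ remains valid in both regimes.
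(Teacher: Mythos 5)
Your proof is correct and follows essentially the same route as the paper: the same decomposition of $X_1$ according to which of $j,k,l$ is positive, the same value $\mu(i,j,0,0)=m-i+j+1$, and the same factorization of the double sum into $(A^{-2}+\delta)^m=A^{2m}$ and $(1+A^{-2}\delta)^a-1=F_a$. The only difference is that you spell out the loop-counting justification of $\mu$ (which checks out, including the $i=m$ boundary case), whereas the paper simply reads this value off the diagram.
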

\begin{proof}
 We decompose $X_1$ into 3 mutually disjoint subsets
 $X_{11}, X_{12}$ and $X_{13}$, where
 \begin{eqnarray*}
 X_{11} &=& \{(i,j,k,l)\in X_1\,|\,j>0\},\\
 X_{12} &=& \{(i,j,k,l)\in X_1\,|\,k>0\},\\
 X_{13} &=& \{(i,j,k,l)\in X_1\,|\,l>0\}.
 \end{eqnarray*}
 If $(i,j,k,l)\in X_{11}$, then $k=l=0$, $j>0$ and $\mu(i,j,0,0)=m-i+j+1$. 
 It follows that
 \begin{eqnarray*} 
 \lefteqn{\sum_{(i,j,k,l)\in X_{11}}   
 \binom mi\binom{a}{j}\binom{b}{k}\binom{c}{l} 
 A^{m+a+b+c-2(i+j+k+l)}\delta^{\mu(i,j,k,l)-1}}\\ 
 &=&\sum_{i=0}^{m}\sum_{j=1}^a \binom mi\binom aj 
 A^{m+a+b+c-2(i+j)}\delta^{m-i+j}\\
 &=&\sum_{j=1}^a\left(\binom aj A^{m+a+b+c-2j}\delta^{j}
 \sum_{i=0}^m\binom mi A^{-2i}\delta^{m-i}\right)\\
 &=&\sum_{j=1}^a\left(\binom aj A^{m+a+b+c-2j}\delta^{j}
 A^{2m}\right)
 =A^{3m-a+b+c}\sum_{j=1}^a\left(\binom aj
 A^{2(a-j)}\delta^j\right)\\
 &=&A^{3m-a+b+c}\delta\left((A^2+\delta)^a-A^{2a}\right)
 =A^{3m-a+b+c}\left((-A^{-2})^a-A^{2a}\right)\\
 &=&A^{3m+a+b+c}\left((-A^{-4})^a-1\right)
 =A^{3m+a+b+c}F_a.
 \end{eqnarray*}
 Similarly, we get
 \begin{eqnarray*}
 \sum_{(i,j,k,l)\in X_{12}}   
 \binom mi\binom{a}{j}\binom{b}{k}\binom{c}{l} 
 A^{m+a+b+c-2(i+j+k+l)}\delta^{\mu(i,j,k,l)-1}
 &=&A^{3m+a+b+c} F_b,\\
 \sum_{(i,j,k,l)\in X_{13}}   
 \binom mi\binom{a}{j}\binom{b}{k}\binom{c}{l} 
 A^{m+a+b+c-2(i+j+k+l)}\delta^{\mu(i,j,k,l)-1}
 &=&A^{3m+a+b+c} F_c.
 \end{eqnarray*}
These imply that Lemma~\ref{lemma:X1}.
\end{proof}

\begin{lemma}\label{lemma:X2}
\begin{eqnarray*}
 \lefteqn{\sum_{(i,j,k,l)\in X_2}  
 \binom mi\binom{a}{j}\binom{b}{k}\binom{c}{l}
A^{m+a+b+c-2(i+j+k+l)}\delta^{\mu(i,j,k,l)-1}}\\
&=&A^{3m+a+b+c}\delta^{-2} (F_aF_b+F_bF_c+F_cF_a).
\end{eqnarray*}
\end{lemma}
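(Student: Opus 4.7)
The plan is to mirror the proof of Lemma~\ref{lemma:X1}. Decompose $X_2$ into three mutually disjoint subsets according to which of $j,k,l$ is forced to vanish:
\[
 X_{21} = \{(i,j,k,l)\in X_2 \,|\, j=0\},\quad
 X_{22} = \{(i,j,k,l)\in X_2 \,|\, k=0\},\quad
 X_{23} = \{(i,j,k,l)\in X_2 \,|\, l=0\},
\]
evaluate the contribution of each piece separately, and add them up.

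The central input is the formula $\mu(i,j,k,0)=m-i+j+k-1$ whenever $j,k\ge1$, and analogously for the other two subsets. To derive this, recall from the analysis behind Lemma~\ref{lemma:X0} that with $j=k=l=0$ the diagram $sD$ has $m-i+3$ components, coming from the pretzel closure of three vertical tangles together with the $m$-tangle. When a pretzel tangle of index $p$ is switched to the \emph{horizontal} configuration (i.e.\ has at least one horizontal smoothing), it contributes $j-1$ internal closed circles, and its external strand pattern changes from NW--SW / NE--SE to NW--NE / SW--SE; this change causes two previously distinct closure loops of the pretzel to coalesce, for a net loss of one external component. Breaking two of $a,b,c$---whether the two broken tangles are adjacent in the pretzel (as for $a,b$ or $b,c$) or non-adjacent with the vertical remaining tangle between them (the $a,c$ case)---yields the same net loss of two external components, and no extra small circle is produced at the interface of two adjacent broken tangles. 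Combining these contributions gives the claimed value of $\mu$.

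With the $\mu$-formula in hand, the sum over $X_{23}$ factors cleanly:
\begin{eqnarray*}
\lefteqn{\sum_{(i,j,k,l)\in X_{23}}
 \binom{m}{i}\binom{a}{j}\binom{b}{k}\binom{c}{l}
 A^{m+a+b+c-2(i+j+k+l)}\delta^{\mu(i,j,k,l)-1}}\\
&=& A^{m+a+b+c}\delta^{-2}
 \left(\sum_{i=0}^{m}\binom{m}{i}A^{-2i}\delta^{m-i}\right)
 \left(\sum_{j=1}^{a}\binom{a}{j}A^{-2j}\delta^{j}\right)
 \left(\sum_{k=1}^{b}\binom{b}{k}A^{-2k}\delta^{k}\right).
\end{eqnarray*}
The three inner sums are exactly those that appeared in Lemma~\ref{lemma:X1}: the $m$-sum collapses to $(A^{-2}+\delta)^m=(-A^2)^m=A^{2m}$ (using $m$ even), and each of the remaining two equals $(1+A^{-2}\delta)^p-1=(-A^{-4})^p-1=F_p$ for the appropriate index $p$. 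Hence $\sum_{X_{23}}=A^{3m+a+b+c}\delta^{-2}F_aF_b$. The same computation applied to $X_{22}$ and $X_{21}$ yields $A^{3m+a+b+c}\delta^{-2}F_aF_c$ and $A^{3m+a+b+c}\delta^{-2}F_bF_c$ respectively; adding the three contributions gives the lemma.

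The main obstacle is the topological component count when the two broken tangles are non-adjacent. One must verify carefully that the two mergers of closure loops happen independently (rather than interacting through the intervening vertical tangle), so that the total loss of external components is exactly $2$, in agreement with the adjacent case. Once this uniformity across the three subcases is confirmed, everything else is a routine repetition of the binomial manipulations from the proof of Lemma~\ref{lemma:X1}.
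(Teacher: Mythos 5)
Your proposal is correct and follows essentially the same route as the paper: the same three-way decomposition of $X_2$ (up to relabeling), the same component count $\mu(i,j,k,0)=m-i+j+k-1$, and the same factorization of the sum into binomial sums giving $A^{2m}$ and $F_aF_b$ (the paper states the $\mu$-values without proof, just as you essentially do, and your observation that the count is uniform in $i$ and in the adjacency of the two broken tangles is indeed the only point needing care). Nothing further is required.
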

\begin{proof}
 We decompose $X_2$ into 3 mutually disjoint subsets
 $X_{21}, X_{22}$ and $X_{23}$, where
 \begin{eqnarray*}
 X_{21} &=& \{(i,j,k,l)\in X_2\,|\,l=0\},\\
 X_{22} &=& \{(i,j,k,l)\in X_2\,|\,j=0\},\\
 X_{23} &=& \{(i,j,k,l)\in X_2\,|\,k=0\}.
 \end{eqnarray*}
 If $(i,j,k,l)\in X_{21}$, then $j,k>0$, $l=0$ and $\mu(i,j,k,0)=m-i+j+k-1$. 
 It follows that
 \begin{eqnarray*} 
 \lefteqn{\sum_{(i,j,k,l)\in X_{21}}   
 \binom mi\binom{a}{j}\binom{b}{k}\binom{c}{l} 
 A^{m+a+b+c-2(i+j+k+l)}\delta^{\mu(i,j,k,l)-1}}\\ 
 &=&\sum_{i=0}^{m}\sum_{j=1}^a\sum_{k=1}^b \binom mi\binom aj\binom bk 
 A^{m+a+b+c-2(i+j+k)}\delta^{m-i+j+k-2}\\
 &=&\sum_{\ft[1]ja,\ft[1]kb}\left(
 \binom aj\binom bk A^{m+a+b+c-2j-2k}\delta^{j+k-2}
 \sum_{i=0}^m\binom mi A^{-2i}\delta^{m-i}\right)\\
 &=&A^{3m-a-b+c}\delta^{-2}\sum_{\ft[1]ja,\ft[1]kb}
 \left(\binom aj A^{2(a-j)}\delta^{j}\binom bk A^{2(b-k)}\delta^{k}\right)\\
 &=&A^{3m-a-b+c}\delta^{-2}\left(\left(A^2+\delta\right)^a-A^{2a}\right)
 \left(\left(A^2+\delta\right)^b-A^{2b}\right)\\
 &=& A^{3m+a+b+c}\delta^{-2} F_a F_b.
 \end{eqnarray*}
 It follows Lemma~\ref{lemma:X2}.
\end{proof}

\begin{lemma}\label{lemma:X3}
\begin{eqnarray*}
 \lefteqn{\sum_{(i,j,k,l)\in X_3}  
 \binom mi\binom{a}{j}\binom{b}{k}\binom{c}{l}
A^{m+a+b+c-2(i+j+k+l)}\delta^{\mu(i,j,k,l)-1}}\\
&=&A^{3m+a+b+c}\delta^{-4} 
\left(F_aF_bF_c+(\delta^2-1)A^{-4m}F_aF_bF_c\right).
\end{eqnarray*}
\end{lemma}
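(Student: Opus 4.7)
The plan is to mimic Lemmas~\ref{lemma:X0}--\ref{lemma:X2}: first determine $\mu(i,j,k,l)$ on $X_3$, then evaluate the resulting sum by factoring it over $i,j,k,l$ and invoking the identities $(A^{-2}+\delta)^m=(-A^2)^m=A^{2m}$ (using that $m$ is even) and $\sum_{t=1}^{n}\binom{n}{t}A^{-2t}\delta^{t}=(-A^{-4})^n-1=F_n$, both implicit in the earlier proofs.

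The essential new phenomenon on $X_3$ is that $\mu$ is \emph{not} given uniformly by the formula $m-i+j+k+l-3$ that continues the additive pattern of Lemmas~\ref{lemma:X0}--\ref{lemma:X2}. Tracing the smoothings in Figure~\ref{figure:Dmabc} shows that this formula holds for $i<m$, but when $i=m$ the entire top row of trivial tangles is nullified in the ``$-1$'' manner, and because $j,k,l>0$ each side tangle contains a horizontal smoothing that detaches it from the top arcs; this decoupling produces \emph{two} additional closed components, so that $\mu(m,j,k,l)=j+k+l-1$. Verifying this component count carefully on Figure~\ref{figure:Dmabc} is expected to be the main obstacle.

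Once $\mu$ is pinned down, I would write the sum over $X_3$ as a ``generic'' part, in which the exponent $\mu-1$ is taken to be $m-i+j+k+l-4$ for all $0\le i\le m$, plus a ``correction'' at $i=m$ that replaces $\delta^{j+k+l-4}$ with $\delta^{j+k+l-2}$, contributing a factor $\delta^{j+k+l-4}(\delta^{2}-1)$. The generic part factors into independent binomial sums, giving
\[
A^{m+a+b+c}\delta^{-4}(A^{-2}+\delta)^{m}\prod_{t\in\{a,b,c\}}\bigl((-A^{-4})^{t}-1\bigr)=A^{3m+a+b+c}\delta^{-4}F_{a}F_{b}F_{c},
\]
after using that $m$ is even. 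The correction at $i=m$ likewise factors and evaluates to
\[
A^{-m+a+b+c}\delta^{-4}(\delta^{2}-1)F_{a}F_{b}F_{c}=A^{3m+a+b+c}\delta^{-4}(\delta^{2}-1)A^{-4m}F_{a}F_{b}F_{c},
\]
and summing the two pieces yields the lemma.
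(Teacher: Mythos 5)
Your proposal is correct and follows essentially the paper's argument: the paper likewise splits $X_3$ according to whether $i<m$ or $i=m$, using exactly your two values $\mu=m-i+j+k+l-3$ and $\mu(m,j,k,l)=j+k+l-1$ (asserted there, as here, without a detailed component count), and evaluates each piece by the same binomial factorizations. Your only deviation is bookkeeping — summing the generic formula over all $0\le i\le m$ and adding a correction $\delta^{j+k+l-4}(\delta^2-1)$ at $i=m$, instead of the paper's two disjoint sub-sums over $X_{31}$ and $X_{32}$ — and both yield $A^{3m+a+b+c}\delta^{-4}\left(F_aF_bF_c+(\delta^2-1)A^{-4m}F_aF_bF_c\right)$.
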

\begin{proof}
 We decompose $X_3$ into 2 disjoint subsets
 $X_{31}$ and $X_{32}$, where
 $\displaystyle X_{31} = \{(i,j,k,l)\in X_3\,|\,i<m\}$ and
 $\displaystyle X_{32} = \{(i,j,k,l)\in X_3\,|\,i=m\}$.
 We have
 \[
 \mu(i,j,k,l) = \left\{
 \begin{array}{ll}
 m-i+j+k+l-3,\quad & (i,j,k,l)\in X_{31}, \\
 j+k+l-1, & (i,j,k,l)\in X_{32}.
 \end{array}
 \right.
 \]
 It follows that
 \begin{eqnarray*}
 \lefteqn{\sum_{(i,j,k,l)\in X_{31}}  
 \binom mi\binom{a}{j}\binom{b}{k}\binom{c}{l}
 A^{m+a+b+c-2(i+j+k+l)}\delta^{\mu(i,j,k,l)-1}}\\ 
 &=& \sum_{i=0}^{m-1}\sum_{j=1}^{a}\sum_{k=1}^{b}\sum_{l=1}^{c}
 \binom mi\binom{a}{j}\binom{b}{k}\binom{c}{l} 
 A^{m+a+b+c-2(i+j+k+l)}\delta^{m-i+j+k+l-4}\\
 &=& A^{m-a-b-c} \delta^{-4}
 \left(\sum_{i=0}^{m-1}\binom miA^{-2i}\delta^{m-i} \right)
 \left(\sum_{j=1}^{a}\binom aj A^{2(a-j)}\delta^{j} \right)
 \left(\sum_{k=1}^{b}\binom bk A^{2(b-k)}\delta^{k} \right)\\
 &&\times\left(\sum_{l=1}^{c}\binom cl A^{2(c-l)}\delta^{l} \right)\\
 &=&A^{m+a+b+c}\delta^{-4}\left(A^{2m}-A^{-2m}\right)F_aF_bF_c
 =A^{3m+a+b+c}\delta^{-4}\left(F_aF_bF_c-A^{-4m}F_aF_bF_c\right).
 \end{eqnarray*}
 We have
 \begin{eqnarray*}
 \lefteqn{\sum_{(i,j,k,l)\in X_{32}}  
 \binom mi\binom{a}{j}\binom{b}{k}\binom{c}{l}
 A^{m+a+b+c-2(i+j+k+l)}\delta^{\mu(i,j,k,l)-1}}\\ 
 &=& \sum_{j=1}^{a}\sum_{k=1}^{b}\sum_{l=1}^{c}
 \binom{a}{j}\binom{b}{k}\binom{c}{l} 
 A^{m+a+b+c-2(m+j+k+l)}\delta^{j+k+l-2}\\
 &=& A^{-m-a-b-c} \delta^{-2}
 \left(\sum_{j=1}^{a}\binom aj A^{2(a-j)}\delta^{j} \right)
 \left(\sum_{k=1}^{b}\binom bk A^{2(b-k)}\delta^{k} \right)
 \left(\sum_{l=1}^{c}\binom cl A^{2(c-l)}\delta^{l} \right)\\
 &=&A^{-m+a+b+c}\delta^{-2}\left(A^{2m}-A^{-2m}\right)F_aF_bF_c
 =A^{3m+a+b+c}\delta^{-4}\left(A^{-4m}\delta^2F_aF_bF_c\right). 
 \end{eqnarray*}
 The property that $m$ is an even number implies
 Lemma~\ref{lemma:X3}.
\end{proof}

\noindent
\begin{proof*}[of Theorem~\ref{theorem:bracket}]
 By Lemmas~\ref{lemma:X0}, \ref{lemma:X1}, \ref{lemma:X2} and \ref{lemma:X3},  
 Equation~(\ref{equation:BrackeSum}) implies that
 \begin{eqnarray*}
  \Kb D &=& A^{3m+a+b+c}\delta^2+A^{3m+a+b+c} (F_a+F_b+F_c)\\
 &&+A^{3m+a+b+c}\delta^{-2} (F_aF_b+F_bF_c+F_cF_a)\\
 &&+A^{3m+a+b+c}\delta^{-4} 
 \left(F_aF_bF_c+(\delta^2-1)A^{-4m}F_aF_bF_c\right)\\
 &=& A^{3m+a+b+c}\delta^{-4}\left(
 F_aF_bF_c+\delta^2(F_aF_b+F_bF_c+F_cF_a)+\delta^4(F_a+F_b+F_c)+\delta^6\right.\\
 &&+\left.(\delta^2-1)A^{-4m}F_aF_bF_c
 \right).
 \end{eqnarray*}
 Since $m$ is an even number, we have
 \[
  \Kb D = A^{3m+a+b+c}\delta^{-4}\left(
 \left(F_a+\delta\right)\left(F_b+\delta\right)\left(F_c+\delta\right)
 +(-A^{-4})^m\left(\delta^2-1\right)F_aF_bF_c
 \right).\makebox[0cm]{\hspace{21mm}\proofbox}
 \]
\end{proof*}

\section{Proof of Theorem~\ref{theorem:Jones}}
\label{161319_29Sep11}
For $\delta=-A^2-A^{-2}$, we have 
\begin{equation}\label{euqation:delta}
 A^{-4}(1-A^{-4})(\delta^2-1)=1+(-A^{-4})^3.
\end{equation}
For $F_a=(-A^{-4})^a-1$, we get
\begin{equation}\label{equation:F}
 F_{a+b} = (-A^{-4})^b F_a + F_b, \quad F_aF_b=F_{a+b}-F_a-F_b, 
\end{equation}
and $\displaystyle  F_6 = A^{-4}F_3(1-A^{-4})(\delta^2-1)$.

\begin{lemma}\label{lemma:TheoremJones1}
\[
 \left(F_{a+2}+\delta^2\right)\left(F_{a+1}+\delta^2\right)-
 \left(F_{a-1}+\delta^2\right)\left(F_{a-2}+\delta^2\right)=
 (-A^{-4})^{a-3}F_6F_{a}.
\] 
\end{lemma}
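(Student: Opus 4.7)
My plan is to reduce the identity to the basic relation~(\ref{euqation:delta}) by a short algebraic manipulation, with no deeper input required. For compactness I set $t := -A^{-4}$ and $c := \delta^{2}-1$, so that $F_{i}=t^{i}-1$ and $F_{i}+\delta^{2}=t^{i}+c$. In this notation, (\ref{euqation:delta}) becomes the clean identity $t(1+t)\,c = -(1+t^{3})$, which I expect to be the only place where the special value of $\delta$ is used.

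The starting observation is that the two products on the left-hand side share the same $c^{2}$ term, and the $c$-linear parts combine into a difference of pure powers of $t$. Direct expansion yields
\[
\mathrm{LHS} \;=\; t^{2a-3}(t^{6}-1) \;+\; c\,t^{a-2}\bigl(t^{4}+t^{3}-t-1\bigr).
\]
Both hidden polynomials factor through $t^{3}-1$: namely $t^{6}-1=(t^{3}-1)(t^{3}+1)$ and $t^{4}+t^{3}-t-1=(t+1)(t^{3}-1)$. Pulling out the common factor collapses the expression to
\[
t^{a-2}(t^{3}-1)\bigl[\,t^{a-1}(t^{3}+1) + c(t+1)\,\bigr].
\]

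At this point the identity $t(1+t)\,c=-(1+t^{3})$ from (\ref{euqation:delta}) rewrites the bracket as $(t^{3}+1)(t^{a}-1)/t$, so the whole expression becomes
\[
t^{a-3}(t^{3}-1)(t^{3}+1)(t^{a}-1) \;=\; (-A^{-4})^{a-3}(t^{6}-1)(t^{a}-1) \;=\; (-A^{-4})^{a-3}F_{6}F_{a},
\]
which is the right-hand side. I do not foresee any serious obstacle: the calculation is essentially bookkeeping, and the only mildly delicate step is recognising that $F_{6}$ emerges automatically from the factorisation $t^{6}-1=(t^{3}-1)(t^{3}+1)$ once (\ref{euqation:delta}) has been invoked.
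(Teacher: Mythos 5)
Your proof is correct and is essentially the paper's own argument in different notation: both expand the difference of products into the pure part $t^{2a-3}(t^{6}-1)=F_{2a+3}-F_{2a-3}$ plus a $(\delta^{2}-1)$-linear part, and then absorb the latter via equation~(\ref{euqation:delta}) (which the paper packages as $F_{6}=A^{-4}F_{3}(1-A^{-4})(\delta^{2}-1)$). Your substitution $t=-A^{-4}$, $c=\delta^{2}-1$ just replaces the paper's bookkeeping with the relations~(\ref{equation:F}) by direct polynomial factorisation; the steps check out.
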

\begin{proof*}
 Proof.~By (\ref{equation:F}), it follows that
 \begin{eqnarray*}
 \lefteqn{ \left(F_{a+2}+\delta^2\right)\left(F_{a+1}+\delta^2\right)-
 \left(F_{a-1}+\delta^2\right)\left(F_{a-2}+\delta^2\right)}\\
 &=& F_{a+2}F_{a+1}-F_{a-1}F_{a-2}
 +\delta^2\left(F_{a+2}+F_{a+1}-F_{a-1}-F_{a-2}\right)\\
 &=& F_{2a+3}-F_{2a-3}+
 (\delta^2-1)\left(F_{a+2}+F_{a+1}-F_{a-1}-F_{a-2}\right)\\
 &=& (-A^{-4})^{2a-3}F_6
 +(\delta^2-1)\left((-A^{-4})^{a-1}F_3+(-A^{-4})^{a-2}F_3\right)\\
 &=& (-A^{-4})^{2a-3}F_6+(-A^{-4})^{a-2}F_3(\delta^2-1)(1-A^{-4})\\
 &=&  F_6\left((-A^{-4})^{2a-3}-(-A^{-4})^{a-3}\right)
 =(-A^{-4})^{a-3}F_6F_{a}.\makebox[0cm]{\hspace*{70mm}\proofbox}
 \end{eqnarray*}
\end{proof*}

\begin{lemma}\label{lemma:TheoremJones2}
\[
 (\delta^2-1)\left(F_{a+2}F_{a+1}-(-A^{-4})^6F_{a-1}F_{a-2}\right)=
 -F_6\left(F_{a}+\delta^2\right).
\] 
\end{lemma}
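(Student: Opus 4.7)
The plan is to reduce the left-hand side using the two identities packaged in (\ref{equation:F}) together with the $\delta$-identity (\ref{euqation:delta}), following the spirit of the proof of Lemma~\ref{lemma:TheoremJones1}. Setting $t=-A^{-4}$ for bookkeeping, so that $F_a = t^a-1$, the two halves of (\ref{equation:F}) become $F_iF_j = F_{i+j}-F_i-F_j$ and $F_{u+v} = t^v F_u + F_v$, and (\ref{euqation:delta}) reads $-t(1+t)(\delta^2-1) = 1+t^3$, which via $F_6 = F_3(1+t^3)$ rearranges to the key relation $F_3(t+1)(\delta^2-1) = -F_6/t$.

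First I would apply the product identity to both $F_{a+2}F_{a+1}$ and $F_{a-1}F_{a-2}$, giving
\[
F_{a+2}F_{a+1} - t^6 F_{a-1}F_{a-2} = \bigl(F_{2a+3}-t^6 F_{2a-3}\bigr) - (F_{a+2}+F_{a+1}) + t^6(F_{a-1}+F_{a-2}).
\]
Using $F_{u+v} = t^v F_u + F_v$ with $v=6$, the first bracket collapses to $F_6$, and the terms $t^6 F_{a-1}$, $t^6 F_{a-2}$ rewrite as $F_{a+5}-F_6$ and $F_{a+4}-F_6$. Pairing $F_{a+5}$ with $F_{a+2}$ and $F_{a+4}$ with $F_{a+1}$, the elementary consequence $F_{u+3}-F_u = t^u F_3$ reduces the whole expression to
\[
F_{a+2}F_{a+1} - t^6 F_{a-1}F_{a-2} = -F_6 + t^{a+1}(t+1)F_3.
\]
Multiplying by $\delta^2-1$ and inserting the relation $F_3(t+1)(\delta^2-1) = -F_6/t$ into the second summand turns the right-hand side into $-F_6(\delta^2-1) - t^a F_6 = -F_6\bigl((\delta^2-1)+t^a\bigr) = -F_6(F_a+\delta^2)$, which is the claimed identity.

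The main obstacle is the final step: one must translate the mixed $A$--$\delta$ identity (\ref{euqation:delta}) into a clean statement on the $t$-side and verify that it pairs exactly with the factor $t^{a+1}(t+1)F_3$ produced in Step~2. Once this bridge is built, everything else is routine bookkeeping with the two parts of (\ref{equation:F}).
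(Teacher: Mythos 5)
Your proof is correct and essentially the same as the paper's: both reduce the bracketed difference to the identical intermediate expression $(-A^{-4})^{a+1}(1-A^{-4})F_3-F_6$ and then close with the relation $F_6=A^{-4}F_3(1-A^{-4})(\delta^2-1)$ (your $F_3(t+1)(\delta^2-1)=-F_6/t$). The only difference is bookkeeping — you derive the intermediate identity via the identities in (\ref{equation:F}), while the paper expands the products directly in powers of $-A^{-4}$ — and your final "main obstacle" is already resolved by the substitution $t=-A^{-4}$ you set up at the start.
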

\begin{proof*}
 We have
 \begin{eqnarray*}
 \lefteqn{F_{a+2}F_{a+1}-(-A^{-4})^6F_{a-1}F_{a-2}}\\
 &=&\left((-A^{-4})^{a+2}-1)\right)\left((-A^{-4})^{a+1}-1)\right)
 -(-A^{-4})^6\left((-A^{-4})^{a-1}-1)\right)
 \left((-A^{-4})^{a-2}-1)\right)\\
 &=&(-A^{-4})^{a+5}+(-A^{-4})^{a+4}-(-A^{-4})^{a+2}-(-A^{-4})^{a+1}
 -(-A^{-4})^{6}+1\\
 &=& (-A^{-4})^{a+1}(1-A^{-4})F_3-F_6.
 \end{eqnarray*}
 It implies that
 \begin{eqnarray*}
 \lefteqn{
  (\delta^2-1)\left(F_{a+2}F_{a+1}-(-A^{-4})^6F_{a-1}F_{a-2}\right)}\\
 &=& (\delta^2-1)(-A^{-4})^{a+1}(1-A^{-4})F_3-(\delta^2-1)F_6\\
 &=& -(-A^{-4})^{a}F_6-(\delta^2-1)F_6
 =-\left((-A^{-4})^{a}-1+\delta^2\right)F_6\\
 &=&-F_6(F_{a}+\delta^2).\makebox[0cm]{\hspace{177mm}\proofbox}
 \end{eqnarray*}
\end{proof*}

\begin{proof}[of Theorem~\ref{theorem:Jones}]
 By Lemma~\ref{lemma:TheoremJones1}, we have
 \begin{eqnarray*}
 \lefteqn{%
 \left(F_{k+4}+\delta^2\right)%
 \left(F_{k+3}+\delta^2\right)%
 \left(F_{k+5}+\delta^2\right)}\\%
 \lefteqn{-\left(F_{k+2}+\delta^2\right)%
 \left(F_{k+1}+\delta^2\right)%
 \left(F_{k+3}+\delta^2\right)
 }\\
 &=& (-A^{-4})^kF_6F_{k+3}\left(F_{k+3}+\delta^2\right).
 \end{eqnarray*}
 By Lemma~\ref{lemma:TheoremJones2}, we also have
 \begin{eqnarray*}
 \lefteqn{(-A^{-4})^kF_{k+4}F_{k+3}F_{k+5}%
 -(-A^{-4})^{k+6}F_{k+2}F_{k+1}F_{k+3}}\\
 &=& -(-A^{-4})^kF_6F_{k+3}\left(F_{k+3}+\delta^2\right)
 \end{eqnarray*}
 These imply that
 \begin{eqnarray*}\nonumber
 \lefteqn{\left(F_{k+4}+\delta^2\right) 
 \left(F_{k+3}+\delta^2\right) \left(F_{k+5}+\delta^2\right)
 +(-A^{-4})^{k}(\delta^2-1)F_{k+4}F_{k+3}F_{k+5}}\\
 &=& \left(F_{k+2}+\delta^2\right) 
 \left(F_{k+1}+\delta^2\right) \left(F_{k+3}+\delta^2\right)
 +(-A^{-4})^{k+6}(\delta^2-1)F_{k+2}F_{k+1}F_{k+3}.
 \label{property:Kb}
 \end{eqnarray*}
 By Theorem~\ref{theorem:bracket}, it follows that
 \[
 A^{12}\Kb{D(k;k+4,k+3,k+5)}=\Kb{D(k+6;k+2,k+1,k+3)}.
 \]
 Since $w(D(k;k+4,k+3,k+5))=k+4$ and $w(D(k+6;k+2,k+1,k+3))=k+8$,
 we obtain $\displaystyle V_{P(k;k+4,k+3,k+5)}=V_{P(k+6;k+2,k+1,k+3)}$.
\end{proof}

\vspace{3mm}

\begin{proof*}[of Proposition~\ref{prop:Alexander}]
 For an alternating knot $K$,
 it is well-known that the genus of $K$ is equal  
 $\frac12\deg\Delta_K$ and a minimal genus Seifert surface
 of $K$ is obtained from its alternating diagram by applying Seifert
 algorithm.
 It follows that 
 \[
 \deg\Delta_{P(k;k+4,k+3,k+5)}=2k+8,\quad
 \deg\Delta_{P(k+6;k+2,k+1,k+3)}=2k+4.\makebox[0cm]{\hspace*{38mm}\proofbox}
 \]
\end{proof*}

\begin{acknowledgements}
 The first author would like to thank Professor Taizo Kanenobu for his
 suggestion.
 This research was motivated by his e-mail about Jones polynomial of
 pretzel links.
\end{acknowledgements}


\bibliography{refsEng}

\bibliographystyle{plain}

\affiliationone{
   Masao Hara\\
   Department of Mathematical Sciences\\
   School of Science\\
   Tokai University\\
   4-1-1 Kitakaname, Hiratsuka-shi\\ Kanagawa, 259-1292 \\
   Japan
   \email{masao@tokai-u.jp}}
\affiliationtwo{
   Makoto Yamamoto\\
   Department of Mathematics\\
   Faculty of Science and Engineering\\
   Chuo University\\
   1-13-27 Kasuga, Bunkyo-ku\\ Tokyo, 112-8551\\
   Japan
   \email{makotoy@math.chuo-u.ac.jp}}

\end{document}